\title{Spherical functors on the Kummer surface}
\author{Andreas Krug and Ciaran Meachan}
\address{Mathematisches Institut, Universit\"{a}t Bonn, Deutschland}
\email{akrug@math.uni-bonn.de}
\address{School of Mathematics, University of Edinburgh, Scotland}
\email{ciaran.meachan@ed.ac.uk}
\begin{document}

\begin{abstract}
We find two natural spherical functors associated to the Kummer surface and analyse how their induced twists fit with Bridgeland's conjecture on the derived autoequivalence group of a complex algebraic K3 surface.
\end{abstract}

\maketitle
{\let\thefootnote\relax\footnotetext{2010 \emph{Mathematics Subject Classification}: 14F05, 14J28, 18E30.

A.K. was supported by the SFB/TR 45 `Periods, Moduli Spaces and Arithmetic of Algebraic Varieties' of the DFG (German Research Foundation) and C.M. was supported by an EPSRC Doctoral Prize Research Fellowship Grant no. EP/K503034/1.}}

\section{Introduction}
Let $\cD(X)$ be the bounded derived category of coherent sheaves on a smooth complex projective variety $X$ and $\Aut(\cD(X))$ denote the set of isomorphism classes of exact $\bbC$-linear autoequivalences of $\cD(X)$. Then we always have a subgroup $\Aut_\trm{st}(\cD(X))\subset \Aut(\cD(X))$ of \emph{standard} autoequivalences which is generated by push forwards along automorphisms, twists by line bundles and shifts. The complement of this subgroup, if non-empty, is usually very interesting and mysterious; its elements will be called \emph{non-standard} autoequivalences.

The most successful way to construct non-standard autoequivalences was discovered in the groundbreaking work of Seidel and Thomas \cite{seidel2001braid} on \emph{spherical objects}. This was extended by Huybrechts and Thomas \cite{huybrechts2006pobjects} to a notion of \emph{$\bbP$-objects} and further still, to a theory of \emph{spherical} and \emph{$\bbP$-functors}; see \cite{rouquier2006categorification,anno2008weak,addington2011new}.

The first example of a series of $\bbP$-functors was constructed by Addington in \cite[Theorem 2]{addington2011new} for the Hilbert scheme $X^{[n]}$ of $n$ points on a K3 surface $X$. In particular, he showed that the natural functor $F:\cD(X)\to\cD(X^{[n]})$ induced by the universal ideal sheaf on $X\times X^{[n]}$ is a $\bbP^{n-1}$-functor in the sense of \cite[\S 3]{addington2011new} and thus gives rise to a non-standard autoequivalence of $\cD(X^{[n]})$ for each $n\geq2$. Notice that when $n=1$, this $F$ is Mukai's reflection functor \cite[p.362]{mukai1987moduli} which coincides (up to a shift) with the spherical twist around the structure sheaf $\cO_X$. 

Inspired by this example, the second author \cite[Theorem 4.1]{meachan2012derived} provided the analogous result for the generalised Kummer variety $K_n\subset A^{[n+1]}$ associated to an abelian surface $A$. 
More precisely, he proved that the natural Fourier-Mukai functor $F_K: \cD(A)\to \cD(K_n)$ induced by the universal ideal sheaf on $A\times K_n$ is again a $\bbP^{n-1}$-functor yielding a new non-standard autoequivalence of $\cD(K_n)$ for each $n\ge 2$. 

This short note completes this theorem to the case $n=1$ where the generalised Kummer variety is the classical Kummer surface. The motivation to understand this particular case comes from Bridgeland's conjecture \cite[Conjecture 1.2]{bridgeland2008stability} on the derived autoequivalence group of a complex algebraic K3 surface; 
roughly speaking, it says that $\Aut(\cD(X))$ should be generated by standard autoequivalences and twists around spherical objects.

\subsection*{Summary of main results} Every abelian surface $A$ has a natural K3 surface associated to it; namely the \emph{Kummer surface} $K:=K_1$. It can either be defined as the blow up of the quotient $A/\iota$ along the sixteen ordinary double points, where $\iota$ denotes the involution $a\mapsto-a$, or equivalently as the fibre of the Albanese map $m:A^{[2]}\to A$ over zero. That is, we can identify $K$ with the subvariety of the Hilbert scheme $A^{[2]}$ consisting of those points representing length 2 subschemes of $A$ whose weighted support sums to zero. In other words, there is a universal family $\cZ\subset A\times K$ giving rise to the commutative diagram
\[\xymatrix{&\cZ\ar[dr]^-q\ar[dl]_-p&\\ A\ar[dr]_-\pi && K\ar[dl]^-\mu\\ &A/\iota&}\] 

Recall that a Fourier-Mukai functor
$F:\cD(Y)\to\cD(X)$ with left adjoint $L$ and right adjoint $R$ is said to be \emph{spherical} if the cotwist $C_F:=\cone(\id\xra\eta RF)$ is an autoequivalence of $\cD(Y)$ and we have a functorial isomorphism $R\simeq CL$. In particular, if $F$ is spherical then the \emph{twist} $T_F:=\cone(FR\xra\epsilon\id)$ is an autoequivalence of $\cD(X)$. A spherical object $\cE\in\cD(X)$ corresponds to the case $F:=(\underline{\;\;})\otimes \cE:\cD(\pt)\to\cD(X)$. 

In this article, we focus on the exact triangle $F\to F'\to F''$ of Fourier-Mukai functors $\Phi_\cE:\cD(A)\to \cD(K)$ induced by the structure sequence of $\cZ$:
\[F:=\Phi_{\cI_\cZ}\qquad F':=\Phi_{\cO_{A\times K}}=\H^*(\_)\otimes \cO_K\qquad F'':=\Phi_{\cO_{\cZ}}=q_*p^*.\]

Our main result is the following

\begin{thm*}[\ref{F''spherical} and \ref{Fspherical}]
Both $F$ and $F''$ are spherical functors with cotwists $C_F\simeq C_{F''}\simeq\iota^*$. 
\end{thm*}

In light of \cite[Conjecture 1.2]{bridgeland2008stability}, this immediately raises the question whether the twists $T_F,T_{F''}\in\Aut(\cD(K))$ associated to these \emph{functors} $F,F''$ can be decomposed into twists $T_\cE$ around spherical \emph{objects} $\cE\in\cD(K)$. We answer this question with the following
%

\begin{thm*}[\ref{F''spherical} and \ref{Fspherical}]
The induced twists $T_{F},T_{F''}\in\Aut(\cD(K))$ decompose in the following way:
\[T_{F''}\simeq\prod_iT_{\cO_{E_i}(-1)}^{-1}\circ M_{\cO_K(E/2)}[1]\simeq \prod_iT_{\cO_{E_i}}\circ M_{\cO_K(-E/2)}[1]\]
and 
\[F[1]\simeq T_{\cO_K}\circ F'' \quad\implies\quad T_F\simeq T_{\cO_K}\circ T_{F''}\circ T_{\cO_K}^{-1}\]
where $E=\bigcup_i E_i$ for the exceptional curves $E_i$ of the Hilbert-Chow morphism $\mu$ and $M_{\cO_K(E/2)}:=(\_)\otimes\cO_K(E/2)$.
\end{thm*}

It is easy to see that the squares $T_F^2,T_{F''}^2$ of our twists act trivially on the cohomology of $K$ (see \cite[\S 1.4]{addington2011new}). 
In fact, Corollary \ref{squares} shows that 
$T_F^2\simeq T_{F''}^2\simeq[2]$.
\vs

In this paper, we will give a different proof of Theorem \ref{Fspherical} to that which could have been obtained from adapting the arguments in \cite{meachan2012derived}. The advantage of our approach is that it immediately provides us with the decompositions of $T_F$ and $T_{F''}$ as stated above. 
\vs

\textbf{Acknowledgements}: We thank Nick Addington and Will Donovan for helpful discussions as well as the Hausdorff Research Institute for Mathematics (HIM) for their excellent hospitality whilst this work was carried out. C.M. is very grateful to Arend Bayer for his consistent help and support. 

\section{Natural Functors on the Kummer Surface}
Another way of describing $K$ is by first blowing-up the fixed points $\tilde A\to A$. Since the fixed points are $\iota$-invariant, the involution $\iota$ lifts to an involution $\tilde{\iota}$ of $\tilde{A}$. 
 \[\xymatrix{&\tilde{A}\ar[dr]^{q}\ar[dl]_p&\\ A\ar[dr]_\pi && K
 \ar[dl]^\mu\\ &A/\iota&}\]
The quotient $\tilde A\to K$ is a double cover ramified over sixteen exceptional curves $E_i$. 
Moreover, the canonical bundle formula for the blow-up yields $\omega_{\tilde{A}} \simeq \cO(\sum \tilde{E}_i)$ where the $\tilde{E}_i$ are the exceptional divisors in $\tilde{A}$. Their images $E_i$ in $K$ satisfy $q^*\cO(E_i) \simeq \cO(2\tilde{E}_i)$ and $q_*\cO_{\tilde{A}} \simeq \cO_K \oplus \cO(-\frac{1}{2}\sum E_i)$. See \cite[Chapter 1.1]{huybrechts2014lectures} for more details. We set $E:=\bigcup_i E_i$ and $\tilde{E}:=\bigcup_i \tilde{E}_i$ from now on. 

%

\begin{prop}\label{F''spherical}
$F'':\cD(A)\to\cD(K)$ is a spherical functor with cotwist $C_{F''}\simeq\iota^*$ and twist
\[T_{F''}\simeq\prod_iT_{\cO_{E_i}(-1)}^{-1}\circ M_{\cO_K(E/2)}[1].\]
\end{prop}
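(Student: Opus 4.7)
The plan is to verify sphericality by a direct Mayer--Vietoris computation of $R_{F''}F''$, and then derive the twist formula from the octahedral axiom applied to a natural factorisation of the counit $F''R_{F''}\to\id$. To begin, since $\omega_{\tilde A/K}=\omega_{\tilde A}\otimes q^*\omega_K^{-1}=\cO(\tilde E)$, the right adjoint of $F''=q_*p^*$ is $R_{F''}=p_*q^!$ with $q^!(\_)=q^*(\_)\otimes\cO(\tilde E)$. Because $A$ and $K$ are both $2$-Calabi--Yau (Serre functor equal to $[2]$), the left and right adjoints coincide, $L_{F''}=R_{F''}=p_*q^!$; this observation will substantially simplify the axiom $R\simeq CL$.

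For the cotwist, I exploit that the fibre product $\tilde A\times_K\tilde A$ decomposes as two copies of $\tilde A$---the diagonal and the graph of $\tilde\iota$---glued along the ramification locus $\tilde E$. Flat base change combined with the associated Mayer--Vietoris triangle yields
\[q^*q_*\cH\to\cH\oplus\tilde\iota^*\cH\to\cH|_{\tilde E}\to[1]\]
for every $\cH\in\cD(\tilde A)$. Specialising to $\cH=p^*\cG$, tensoring by $\cO(\tilde E)$, and applying $p_*$, the projection formula together with the elementary identity $p_*\cO(\tilde E)\simeq\cO_A$ (from the exact sequence $0\to\cO_{\tilde A}\to\cO(\tilde E)\to\cO_{\tilde E}(\tilde E)\to 0$ and $p_*\cO_{\mathbb P^1}(-1)=0$) and the vanishing of the overlap term $p_*((p^*\cG)|_{\tilde E}\otimes\cO(\tilde E))=\bigoplus_i\cG_{a_i}\otimes p_*\cO_{\mathbb P^1}(-1)=0$ yield $R_{F''}F''\cG\simeq\cG\oplus\iota^*\cG$. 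Hence $C_{F''}\simeq\iota^*$, which is manifestly an autoequivalence. The axiom $R\simeq CL$ is then immediate: $\iota^*\circ p_*q^!=p_*\tilde\iota^*q^!=p_*q^!$, using $q\tilde\iota=q$ and $\tilde\iota^*\cO(\tilde E)=\cO(\tilde E)$.

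For the twist, apply the octahedral axiom to the factorisation $F''R_{F''}=q_*(p^*p_*)q^!\to q_*q^!\to\id$ of the counit, obtaining a distinguished triangle
\[q_*\cone(p^*p_*\to\id)q^!\longrightarrow T_{F''}\longrightarrow\cone(q_*q^!\to\id)\to[1].\]
The right-hand cone equals $M_{\cO_K(E/2)}[1]$: projection formula together with $q_*\cO_{\tilde A}=\cO_K\oplus\cO_K(-E/2)$ gives a natural splitting $q_*q^!\cF\simeq\cF\oplus\cF(E/2)$, with the counit being projection to the first summand. For the left-hand cone, Orlov's semi-orthogonal decomposition $\cD(\tilde A)=\langle p^*\cD(A),\cO_{\tilde E_i}(-1)\rangle_i$ identifies $\cone(p^*p_*\to\id)$ with the natural projection onto $\bigoplus_i\langle\cO_{\tilde E_i}(-1)\rangle$; applying $q_*(\cdot)q^!$ converts these into the spherical objects $\cO_{E_i}(-1)\in\cD(K)$ (using that $q$ restricts to an isomorphism $\tilde E_i\to E_i$). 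Matching this triangle with the canonical triangle coming from $\prod_i T^{-1}_{\cO_{E_i}(-1)}\circ M_{\cO_K(E/2)}[1]$ then yields the claimed formula.

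The main obstacle is this last identification: verifying that the natural transformations produced by the octahedral triangle agree precisely with those arising from the composition of inverse spherical twists. As a sanity check, evaluating at $\cF=\cO_K$ predicts $T_{F''}(\cO_K)\simeq\cO_K(-E/2)[1]$, which fits the expected triangle $\bigoplus_i\cO_{E_i}(-1)\to\cO_K(-E/2)[1]\to\cO_K(E/2)[1]$ coming from the short exact sequence $0\to\cO_K(-E/2)\to\cO_K(E/2)\to\bigoplus_i\cO_{E_i}(-1)\to 0$. The cleanest completion is likely to compare Fourier--Mukai kernels of the two sides on $K\times K$, or alternatively to check agreement on a generating set (e.g.\ skyscrapers and an ample line bundle), using that both sides are already known to be autoequivalences.
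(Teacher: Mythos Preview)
Your approach differs substantially from the paper's. The paper does not compute $R_{F''}F''$ directly at all; instead it observes that $q_*:\cD(\tilde A)\to\cD(K)$ is itself spherical (a standard example for a ramified double cover) with cotwist $S_{\tilde A}\circ\tilde\iota^*[-2]$ and twist $M_{\cO_K(E/2)}[1]$, and then invokes the factorisation theorem of Halpern-Leistner--Shipman. Given the semi-orthogonal decomposition $\cD(\tilde A)=\langle\cA,\cB\rangle$ with $\cA=\langle\cO_{\tilde E_i}(-1)\rangle_i$ and $\cB=p^*\cD(A)$, one checks the compatibility $C_{q_*}\cB\simeq S_{\tilde A}\cB$, and the theorem then asserts that both restrictions $q_*|_\cA$ and $q_*|_\cB=F''$ are spherical, with $T_{q_*}\simeq T_{q_*|_\cA}\circ T_{F''}$. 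Since $q_*\cO_{\tilde E_i}(-1)\simeq\cO_{E_i}(-1)$ gives $T_{q_*|_\cA}\simeq\prod_i T_{\cO_{E_i}(-1)}$, the twist formula drops out by inversion, and the cotwist $C_{F''}\simeq\iota^*$ is read off from $C_{q_*}$ restricted to $\cB$.

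Your Mayer--Vietoris computation of $R_{F''}F''$ is correct and pleasantly concrete (you should still say a word about why the unit $\id\to R_{F''}F''$ is the inclusion of the first summand --- it is, tracing the base-change map --- since this is what pins down $C_{F''}$ rather than just $RF$). The genuine gap is the one you yourself flag: the octahedral triangle $q_*\cone(p^*p_*\to\id)q^!\to T_{F''}\to T_{q_*}$ does not by itself determine the middle term, and matching the connecting morphism with that arising from $\prod_i T^{-1}_{\cO_{E_i}(-1)}\circ T_{q_*}$ is real work at the level of Fourier--Mukai kernels. Your proposed fix of checking on a spanning class is viable once you know $T_{F''}$ is an equivalence (which your cotwist argument does establish), but this bookkeeping is precisely what the Halpern-Leistner--Shipman theorem packages: in effect your octahedral argument is re-deriving a special case of that result by hand, and citing it directly is both shorter and yields the factorisation without any cone-matching.
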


\begin{proof}
Pushforward along the double cover $q_*:\cD(\tilde{A})\to\cD(K)$ is a spherical functor with cotwist $C_{q_*}\simeq M_{\cO_{\tilde{A}}(\tilde{E})}\circ\tilde{\iota}^*\simeq S_{\tilde{A}}\circ\tilde{\iota}^*[-2]$ and twist $T_{q_*}\simeq M_{\cO_K(E/2)}[1]$; see \cite[\S 1.2, Examples 5 \& 6]{addington2011new}. 

By \cite[Theorem 4.3]{orlov1993projective}, we have a semi-orthogonal decomposition 
\[\cD(\tilde{A})\simeq\langle \cO_{\tilde{E}_1}(-1),\ldots,\cO_{\tilde{E}_{16}}(-1),p^*\cD(A)\rangle\]
We set $\cA:=\langle \cO_{\tilde{E}_1}(-1),\ldots,\cO_{\tilde{E}_{16}}(-1)\rangle$ and $\cB:=p^*\cD(A)$ so that $\cD(\tilde{A})\simeq\langle\cA,\cB\rangle$. 
Since $\cD(\tilde{A})\simeq\langle S_{\tilde{A}}\cB,\cA\rangle$ by \cite{bondal1989representable} and $C_{q_*}\cB\simeq S_{\tilde{A}}\cB$, we have $\cD(\tilde{A})\simeq\langle C_{q_*}\cB,\cA\rangle$. Thus, by \cite[Theorem 4.13]{halpernleistner2013autoequivalences}, the restrictions $q_*|_{\cA}:\cD(A[2])\to\cD(K)$ (to the set $A[2]\subset A$ of 2-torsion points) and $q_*|_{\cB}\simeq q_*p^*=:F'':\cD(A)\to\cD(K)$ are spherical functors with $T_{q_*}\simeq T_{q_*|_{\cA}}\circ T_{q_*|_{\cB}}$. Since $q_*\cO_{\tilde{E}_i}(-1)\simeq\cO_{E_i}(-1)$, we see that $T_{q_*|_{\cA}}\simeq\prod_i T_{\cO_{E_i}(-1)}$ and hence
\[T_{F''}\simeq T_{q_*|_{\cA}}^{-1}\circ T_{q_*}\simeq\prod_iT_{\cO_{E_i}(-1)}^{-1}\circ M_{\cO_K(E/2)}[1].\]
Notice that the cotwist of $F''\simeq q_*|_{\cB}$ is given by $S_A\circ\iota^*[-2]\simeq \iota^*$. 
\end{proof}

\begin{rmk}
We can use equation \eqref{twistj} below to rewrite this decomposition as \[T_{F''}\simeq \prod_iT_{\cO_{E_i}}\circ M_{\cO_K(-E/2)}[1].\]
\end{rmk}

\begin{lem}\label{F[1]=T_OF''}
We have the following isomorphism of functors \[F[1]\simeq T_{\cO_K} \circ F''.\]
\end{lem}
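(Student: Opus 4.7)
The plan is to identify the rotated structure-sequence triangle with the triangle defining the action of $T_{\cO_K}$ on $F''$, which forces the third terms to agree. Starting from the triangle $F\to F'\to F''$ (arising from $0\to\cI_\cZ\to\cO_{A\times K}\to\cO_\cZ\to 0$), rotation yields a triangle of Fourier--Mukai functors
\[F'\to F''\to F[1].\]
On the other hand, for any $Y\in\cD(K)$ the spherical twist $T_{\cO_K}$ sits inside the functorial triangle $R\Hom_K(\cO_K,Y)\otimes\cO_K\xra{\mathrm{ev}} Y\to T_{\cO_K}(Y)$. Setting $Y=F''(X)$ produces, functorially in $X$, a second triangle whose first two terms I want to match with those of the rotated structure triangle.

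For the first term, since $F''=q_*p^*$, an adjunction chase gives
\[R\Hom_K(\cO_K,F''(X))\simeq R\Gamma(\tilde A, p^*X)\simeq R\Gamma(A, p_*p^*X),\]
and the projection formula together with the blow-up identity $p_*\cO_{\tilde A}\simeq\cO_A$ reduces this to $R\Gamma(A,X)$. Tensoring with $\cO_K$ produces a natural isomorphism with $F'(X)=R\Gamma(A,X)\otimes\cO_K$, matching the first terms of the two triangles.

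For the morphism between the first two terms, both the structure-sequence map and the evaluation map are natural transformations $F'\Rightarrow F''$. By the standard Fourier--Mukai computation,
\[\Hom(F',F'')\simeq\Hom_{A\times K}(\cO_{A\times K},\cO_\cZ)\simeq H^0(\cZ,\cO_\cZ)\simeq\bbC.\]
Each of the two maps is manifestly nonzero (testing on $X=\cO_A$ is enough), so they are proportional by a nonzero scalar; in particular their cones are isomorphic, yielding $F[1]\simeq T_{\cO_K}\circ F''$.

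The main technical point is ensuring that the adjunction chain computing $R\Hom_K(\cO_K,F''(X))$ really produces an isomorphism natural in $X$, so that the identification of the first terms is an isomorphism of functors; the final identification of the two morphisms then reduces, thanks to the one-dimensionality of the Hom-space, to the cheap observation that both are nonzero. A more hands-on alternative would be to carry out the same argument at the level of kernels: convolving $\cO_K\boxtimes\cO_K\to\cO_\Delta\to P_{\cO_K}$ with $\cO_\cZ$ and using $p_*\cO_{\tilde A}\simeq\cO_A$ to simplify $(\cO_K\boxtimes\cO_K)\ast\cO_\cZ\simeq\cO_{A\times K}$ exhibits $P_{\cO_K}\ast\cO_\cZ$ as $\cI_\cZ[1]$ directly.
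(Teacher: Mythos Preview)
Your argument is correct and follows essentially the same route as the paper: compare the rotated structure triangle $F'\to F''\to F[1]$ with the twist triangle $\Hom^*(\cO_K,F'')\otimes\cO_K\to F''\to T_{\cO_K}F''$, and identify the first terms via the blow-up identity $p_*\cO_{\tilde A}\simeq\cO_A$. You are in fact more careful than the paper, which stops after matching the first terms; your extra step---using $\Hom_{A\times K}(\cO_{A\times K},\cO_\cZ)\simeq H^0(\cZ,\cO_\cZ)\simeq\bbC$ to force the two arrows to be proportional---fills a gap the paper leaves implicit, and your kernel-level alternative makes this rigorous without invoking the (nontrivial) identification of functor-Homs with kernel-Homs.
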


\begin{proof}
Consider the following exact triangles of functors
\[\Hom^*(\cO_K,F'')\otimes\cO_K\to F'' \to T_{\cO_K}\circ F''\quad\trm{and}\quad F'\to F''\to F[1].\] Then it is sufficient to show that $\Hom^*(\cO_K,F'')\otimes\cO_K\simeq F'\simeq \H^*(A,\_)\otimes \cO_K$. In other words, it is enough to show that $\H^*(K,F''(\_))\simeq\H^*(A,\_)$ but this follows from the fact that $p$ is a blowup. Indeed, we have \[\H^*(K,F''(\_))\simeq\H^*(K,q_*p^*(\_))\simeq\H^*(\tilde{A},p^*(\_))\simeq \H^*(A,p_*p^*(\_))\simeq\H^*(A,\_).\qedhere\]
\end{proof}

\begin{cor}\label{Fspherical}
$F:\cD(A)\to\cD(K)$ is a spherical functor with cotwist $C_F\simeq\iota^*$ and twist
\[T_{F}\simeq T_{\cO_K}\circ T_{F''}\circ T_{\cO_K}^{-1}.\]
\end{cor}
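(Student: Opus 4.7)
The plan is to leverage Lemma~\ref{F[1]=T_OF''}, which realises $F$ as a shifted composite of $F''$ with the autoequivalence $T_{\cO_K}$ of $\cD(K)$. The guiding principle is that sphericality is preserved under post-composition with an autoequivalence of the target, with the cotwist unchanged and the twist conjugated by that autoequivalence.

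More precisely, let $G:\cD_1\to\cD_2$ be a spherical functor with left adjoint $L$, right adjoint $R$, cotwist $C_G$ and twist $T_G$, and let $\Psi$ be any autoequivalence of $\cD_2$. Then $\Psi\circ G$ has left adjoint $L\Psi^{-1}$ and right adjoint $R\Psi^{-1}$, so its cotwist is
\[\cone\bigl(\id\to R\Psi^{-1}\Psi G\bigr)\simeq\cone(\id\to RG)\simeq C_G,\]
its twist is
\[\cone\bigl(\Psi G R\Psi^{-1}\to\id\bigr)\simeq \Psi\circ\cone(GR\to\id)\circ\Psi^{-1}\simeq \Psi\circ T_G\circ\Psi^{-1},\]
and the sphericality relation $R\simeq C_G\,L$ transports to $R\Psi^{-1}\simeq C_G\,L\Psi^{-1}$, so $\Psi\circ G$ is again spherical. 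This is a purely formal calculation.

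To finish, I would rewrite Lemma~\ref{F[1]=T_OF''} as $F\simeq (T_{\cO_K}[-1])\circ F''$, using that the shift is central in the autoequivalence group. Applying the principle above with $G=F''$ and $\Psi:=T_{\cO_K}[-1]$, Proposition~\ref{F''spherical} provides $C_{F''}\simeq\iota^*$, and therefore
\[C_F\simeq\iota^*,\qquad T_F\simeq T_{\cO_K}[-1]\circ T_{F''}\circ [1]\,T_{\cO_K}^{-1}\simeq T_{\cO_K}\circ T_{F''}\circ T_{\cO_K}^{-1},\]
since the shifts cancel by centrality. The only non-formal input is Lemma~\ref{F[1]=T_OF''}, which is already in hand; everything else is a bookkeeping exercise with adjoints, and I do not anticipate any real obstacle.
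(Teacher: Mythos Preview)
Your proposal is correct and follows essentially the same approach as the paper: both arguments invoke Lemma~\ref{F[1]=T_OF''} and the general fact that post-composing a spherical functor with an autoequivalence of the target preserves sphericality, leaves the cotwist unchanged, and conjugates the twist. The only cosmetic difference is that the paper records $T_F\simeq T_{F[1]}$ directly, whereas you absorb the shift into $\Psi=T_{\cO_K}[-1]$ and cancel it by centrality; these are equivalent bookkeeping choices.
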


\begin{proof}
Recall that if $F:\cD(Z)\to\cD(Y)$ is a spherical functor and $\Phi:\cD(Y)\xra\sim\cD(X)$ is an equivalence of categories then $\Phi\circ F:\cD(Z)\to\cD(X)$ is also a spherical functor with the same cotwist and $T_{\Phi\circ F}\simeq \Phi\circ T_F\circ \Phi^{-1}$. 
In particular, we see immediately from Lemma \ref{F[1]=T_OF''} that $F$ is a spherical functor with cotwist $C_F\simeq\iota^*$ and twist 
\[T_F\simeq T_{F[1]}\simeq T_{\cO_K}\circ T_{F''}\circ T_{\cO_K}^{-1}.\qedhere\]
\end{proof}

\begin{cor}\label{squares}
The squares of the spherical twists are given by \[T_{F}^2\simeq T_{F''}^2\simeq[2].\] In particular, $T_F^2,T_{F''}^2$ act trivially on cohomology.
\end{cor}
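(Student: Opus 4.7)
My plan is first to reduce the statement for $T_F$ to the statement for $T_{F''}$, then to compute $T_{F''}^2$ using the explicit decomposition of Proposition \ref{F''spherical}. For the reduction, Corollary \ref{Fspherical} gives
\[T_F^2 \simeq T_{\cO_K} \circ T_{F''}^2 \circ T_{\cO_K}^{-1},\]
and since $[2]$ is central in $\Aut(\cD(K))$, it suffices to establish $T_{F''}^2 \simeq [2]$.

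Set $P := \prod_i T_{\cO_{E_i}(-1)}$ and $N := M_{\cO_K(E/2)}$, so that Proposition \ref{F''spherical} reads $T_{F''} \simeq P^{-1} \circ N \circ [1]$. Squaring gives $T_{F''}^2 \simeq P^{-1} \circ (N \circ P^{-1} \circ N^{-1}) \circ N^2 \circ [2]$. Using the functorial identity $M_\cL \circ T_\cE \circ M_\cL^{-1} \simeq T_{\cL \otimes \cE}$ together with the restriction $\cO_K(E/2)|_{E_i} \simeq \cO_{E_i}(-1)$ (which follows from $E_i^2=-2$ and pairwise disjointness of the $E_i$'s), one obtains the conjugation $N \circ T_{\cO_{E_i}(-1)} \circ N^{-1} \simeq T_{\cO_{E_i}(-2)}$. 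Since the exceptional curves are disjoint, all the twists commute with each other, and combined with $N^2 \simeq M_{\cO_K(E)}$ we get
\[T_{F''}^2 \simeq \Bigl(\prod_i T_{\cO_{E_i}(-2)} \circ T_{\cO_{E_i}(-1)}\Bigr)^{-1} \circ M_{\cO_K(E)} \circ [2].\]
Hence $T_{F''}^2 \simeq [2]$ is equivalent to $\prod_i T_{\cO_{E_i}(-2)} \circ T_{\cO_{E_i}(-1)} \simeq M_{\cO_K(E)}$, and by the disjointness of the $E_i$'s this reduces to the single-curve identity $T_{\cO_C(-2)} \circ T_{\cO_C(-1)} \simeq M_{\cO_K(C)}$ for each $(-2)$-curve $C = E_i$.

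The main obstacle is establishing this single-curve identity, a local statement about spherical twists on a $(-2)$-curve in a K3 that is classical in the Seidel-Thomas / Ishii-Uehara literature. A direct verification would exhibit the functorial isomorphism on a spanning class such as $\{\cO_K\} \cup \{\cO_C(k) : k \in \bbZ\}$, using the Euler-type sequence $0 \to \cO_C(k-1) \to \cO_C(k)^{\oplus 2} \to \cO_C(k+1) \to 0$ on $C \cong \bbP^1$ together with the divisor sequence $0 \to \cO_K \to \cO_K(C) \to \cO_C(-2) \to 0$; for instance one immediately computes $T_{\cO_C(-1)}(\cO_C) \simeq \cO_C(-2)[1]$ from the first sequence and $T_{\cO_C(-2)}(\cO_K) \simeq \cO_K(C)$ from the second. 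Upgrading these object-level checks to a genuine functorial isomorphism requires a careful cone/octahedron argument, but the verification is of a purely formal-neighbourhood nature near $C$ and is independent of the global geometry of $K$.
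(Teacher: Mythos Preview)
Your reduction of $T_F^2$ to $T_{F''}^2$ and the subsequent algebraic manipulation of the square are exactly what the paper does. After writing $T_{F''}=P^{-1}N[1]$ with $P=\prod_i T_{\cO_{E_i}(-1)}$ and $N=M_{\cO_K(E/2)}$, the paper also uses the conjugation rule $M_\cL T_\cE M_\cL^{-1}\simeq T_{\cL\otimes\cE}$; your identity $\prod_i T_{\cO_{E_i}(-2)}T_{\cO_{E_i}(-1)}\simeq M_{\cO_K(E)}$ is the $N$-conjugate of the paper's identity $\prod_i T_{\cO_{E_i}(-1)}\prod_i T_{\cO_{E_i}}\simeq M_{\cO_K(E)}$, so the two computations are literally the same group-theoretic argument.

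The one real difference is how this key identity is justified. You reduce it to the single-curve statement $T_{\cO_C(-2)}T_{\cO_C(-1)}\simeq M_{\cO_K(C)}$, call it classical, and sketch an object-level check that you yourself note does not yet give a functorial isomorphism. The paper instead obtains the identity in one stroke: the inclusion $j\colon E\hookrightarrow K$ of the smooth divisor gives a spherical functor $j_*$ with twist $T_{j_*}\simeq M_{\cO_K(E)}$, and the Beilinson decomposition $\cD(E)=\langle\cA_1,\cA_2\rangle$ with $\cA_1=\langle\cO_{E_i}(-1)\rangle$, $\cA_2=\langle\cO_{E_i}\rangle$ feeds into the factorisation theorem of Halpern-Leistner--Shipman (Kuznetsov's trick) to yield $T_{j_*|_{\cA_1}}\circ T_{j_*|_{\cA_2}}\simeq T_{j_*}$, i.e.\ exactly the product identity. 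This makes the proof self-contained and bypasses the spanning-class/octahedron verification you left open; conversely, your route has the virtue of being entirely elementary once the single-curve identity is granted.
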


\begin{proof}
Let $j:E\to K$ denote the inclusion of the exceptional divisor. Since $E$ is smooth, we can apply \cite[\S 1.2, Example 5]{addington2011new} to see that $j_*:\cD(E)\to\cD(K)$ is spherical with cotwist $C_{j_*}\simeq M_{\cO_{E}(E)}[-1]\simeq S_E[-2]$ and twist $T_{j_*}\simeq M_{\cO_K(E)}$. 

Set $\cA_1:=\langle\cO_{E_1}(-1),\ldots,\cO_{E_{16}}(-1)\rangle$ and $\cA_2:=\cA_1\otimes\cO_E(1)$ to be subcategories of $\cD(E)$.
Then, by \cite[Theorem 2.6]{orlov1993projective}, we have a semi-orthogonal decomposition
\[\cD(E)\simeq\langle \cA_1,\cA_2\rangle\]
Thus, using Kuznetsov's trick \cite[Theorem 11]{addington2013categories} (which is a special case of \cite[Theorem 4.13]{halpernleistner2013autoequivalences}), we see that the restriction $j_\ell:=j_*|_{\cA_\ell}:\cD(A[2])\to\cD(K)$ is spherical 
for each $\ell=1,2$ and the twists satisfy $T_{j_1}\circ T_{j_2}\simeq T_{j_*}$. That is
\begin{equation}\label{twistj}
\prod_i T_{\cO_{E_i}(-1)}\circ\prod_i T_{\cO_{E_i}}\simeq M_{\cO_K(E)}.
\end{equation}

Furthermore, we have $j_1\simeq M_{\cO_K(E/2)}\circ j_2$ since $\cO_{E_i}(E/2)\simeq\cO_{E_i}(-1)$ and so 
\[T_{j_1}\simeq T_{M_{\cO_K(E/2)}\circ j_2}\simeq M_{\cO_K(E/2)}\circ T_{j_2}\circ M_{\cO_K(-E/2)}\] 
which, after taking inverses, equates to 
\begin{equation}\label{switch}
\prod_iT_{\cO_{E_i}(-1)}^{-1}\circ M_{\cO_K(E/2)}\simeq M_{\cO_K(E/2)}\circ \prod_iT_{\cO_{E_i}}^{-1}.
\end{equation}
This expression allows us to reduce the formula for $T_{F''}^2$ in the following way:
\begin{align*}
T_{F''}^2 &\simeq \prod_iT_{\cO_{E_i}(-1)}^{-1}\circ M_{\cO_K(E/2)}\circ\prod_iT_{\cO_{E_i}(-1)}^{-1}\circ M_{\cO_K(E/2)}[2]\\
&\simeq M_{\cO_K(E/2)}\circ \prod_iT_{\cO_{E_i}}^{-1}\circ\prod_iT_{\cO_{E_i}(-1)}^{-1}\circ M_{\cO_K(E/2)}[2]\\
&\simeq M_{\cO_K(E/2)}\circ M_{\cO_K(-E)}\circ M_{\cO_K(E/2)}[2]\\
&\simeq[2]
\end{align*}
where the second and third lines follow from equations \eqref{switch} and \eqref{twistj} respectively.

The fact that $T_F^2\simeq[2]$ now follows immediately from Corollary \ref{Fspherical}.
\end{proof}

\begin{cor}\label{imFspans}
$\im F$ and $\im F''$ are spanning classes for $\cD(K)$.
\end{cor}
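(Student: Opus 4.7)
My plan is to deduce both spanning class conditions for each of $\im F$ and $\im F''$ from the two adjunctions of a spherical functor, using the identity $T_G^2\simeq[2]$ from Corollary \ref{squares} as the key input to obtain conservativity.

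Fix $G\in\{F,F''\}$ with right adjoint $R$, left adjoint $L$, and cotwist $C_G\simeq\iota^*$, and take $c\in\cD(K)$. Suppose first that $\Hom^*(G(a),c)=0$ for every $a\in\cD(A)$. By the $(G,R)$-adjunction this gives $R(c)=0$, hence $GR(c)=0$, and the defining triangle $GR(c)\to c\to T_G(c)$ then forces $c\simeq T_G(c)$. Iterating and applying Corollary \ref{squares} yields $c\simeq T_G^2(c)\simeq c[2]$, and since $c$ is bounded, comparing cohomology sheaves (which would have to be $2$-periodic) shows $c=0$.

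For the dual condition $\Hom^*(c,G(a))=0$ for all $a$, the $(L,G)$-adjunction gives $L(c)=0$. The defining spherical isomorphism $R\simeq C_G\circ L$, with $C_G$ an autoequivalence, then forces $R(c)=0$, reducing to the previous case.

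Since the same argument applies verbatim to $G=F$ and $G=F''$, both images are spanning classes. There is no real obstacle here: the only nontrivial input is $T_G^2\simeq[2]$, which is exactly Corollary \ref{squares}, and everything else is packaged into the definition of a spherical functor recalled in the introduction. As an alternative finishing move for $F$, one could instead invoke Lemma \ref{F[1]=T_OF''} to write $\im F=T_{\cO_K}(\im F'')[-1]$ and use that autoequivalences preserve spanning classes, but the direct argument above is no more work.
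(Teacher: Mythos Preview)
Your argument is correct and follows essentially the same route as the paper: the crucial step in both is to show $\ker R=0$ via the triangle $GR(c)\to c\to T_G(c)$ together with $T_G^2\simeq[2]$ from Corollary~\ref{squares}. The only cosmetic difference is that the paper packages the two spanning conditions into the general statement (from \cite[\S1.4]{addington2011new}) that $\im G\cup\ker R$ is always a spanning class for a spherical functor, whereas you verify both conditions by hand, using $R\simeq C_G L$ to reduce the left-adjoint side to the right-adjoint side.
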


\begin{proof}
For any spherical functor $F:\cD(Y)\to\cD(X)$, we have a natural spanning class for $\cD(X)$ given by $\im F\cup(\im F)^\perp\simeq\im F\cup\ker R$; see \cite[\S 1.4]{addington2011new}. However, in our case we have $\ker R=0$. Indeed, let $\cE\in \ker R$. Then the defining triangle for the twist $FR(\cE)\to\cE\to T_F(\cE)$ shows that $T_F(\cE)\simeq\cE$. But by Corollary \ref{squares}, we have $\cE\simeq T_{F}^2(\cE)\simeq\cE[2]$ which implies $\cE\simeq0$; a similar argument works for $F''$.
\end{proof}

\begin{rmk}
This should be contrasted to the object case where every spherical object $\cE$ is expected to have a non-empty perpendicular $\cE^\perp$; \cite[Question 1.25]{ploog2005groups}.
\end{rmk}

\begin{lem}\label{split}
The functors $F,F'':\cD(A)\to\cD(K)$ are actually split spherical. That is, the natural triangles 
associated to the units $\eta,\eta''$ of adjunction are split. In particular, this implies that $F$ and $F''$ are faithful.
\end{lem}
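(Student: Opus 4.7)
The plan is to verify that the connecting morphism of the cotwist triangle vanishes -- forcing the triangle to split -- and then to deduce faithfulness from the resulting natural retraction of the unit.

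For $F''$, with right adjoint $R''$, the defining triangle $\id\xra{\eta''}R''F''\to C_{F''}\simeq\iota^*$ splits as soon as its connecting morphism, a natural transformation $\iota^*\to\id[1]$, vanishes. Such a natural transformation corresponds to a morphism of Fourier--Mukai kernels in $\Hom_{\cD(A\times A)}(\cO_{\Gamma_\iota},\cO_\Delta[1])$, where $\Gamma_\iota$ is the graph of $\iota$ and $\Delta$ is the diagonal. By Serre duality on $A\times A$ (using $\omega_{A\times A}\simeq\cO$ and $\dim=4$), this Hom-group is dual to $\Ext^3(\cO_\Delta,\cO_{\Gamma_\iota})$. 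Since $\Delta$ and $\Gamma_\iota$ are smooth codimension-$2$ subvarieties of $A\times A$ meeting transversely along the 16 fixed points $A[2]$ (tangent spaces $\{(v,v)\}\cap\{(v,-v)\}=0$ in characteristic zero), a local Koszul computation gives $\mathcal{E}xt^i(\cO_\Delta,\cO_{\Gamma_\iota})=0$ for $i\neq 2$ and $\mathcal{E}xt^2(\cO_\Delta,\cO_{\Gamma_\iota})$ a skyscraper supported on $A[2]$. The local-to-global spectral sequence therefore collapses to give $\Ext^3(\cO_\Delta,\cO_{\Gamma_\iota})=0$, so the triangle splits and $R''F''\simeq\id\oplus\iota^*$ with $\eta''$ the canonical inclusion of the first summand. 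The identical argument applies to $F$, since $C_F\simeq\iota^*$ by Corollary~\ref{Fspherical}.

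Faithfulness is then immediate: the splitting produces a natural retraction $r\colon R''F''\to\id$ of $\eta''$, and for any $f$ with $F''(f)=0$, naturality of $r$ combined with $r\circ\eta''=\id$ forces $f=r\circ R''F''(f)\circ\eta''=0$; likewise for $F$. The main obstacle is the local Koszul calculation, which reduces to verifying that the local defining equations $u_i=y_i-x_i$ of $\Delta$ restrict to a regular sequence (namely $(-2x_1,-2x_2)$) on $\Gamma_\iota$ at each point of $A[2]$; this is the numerical manifestation of transversality and is automatic in characteristic zero.
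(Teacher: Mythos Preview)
Your argument is correct and rests on the same idea as the paper's: the cotwist triangle splits because the relevant morphism space between the kernels $\cO_{\Gamma_\iota}$ and $\cO_\Delta$ on $A\times A$ vanishes. The paper's computation is more direct, however. Rather than passing through Serre duality and the local--to--global spectral sequence, it computes $\Ext^1_{A\times A}(\cO_\Delta,\cO_{\Gamma_\iota})$ (which equals your $\Hom(\cO_{\Gamma_\iota},\cO_\Delta[1])$ after applying the automorphism $(\id,\iota)$ of $A\times A$) via the adjunction $\Delta_*\dashv\Delta^!$: since the normal bundle of the diagonal in $A\times A$ is trivial, $\Delta^!\simeq\Delta^*[-2]$, and transversality gives $L\Delta^*\cO_{\Gamma_\iota}\simeq\cO_{A[2]}$, so the group is $\H^{-1}(A,\cO_{A[2]})=0$. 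Your route makes the geometric input (transversality of $\Delta$ and $\Gamma_\iota$) more visible through the Koszul computation, while the paper's adjunction trick compresses everything into one line; you also spell out the faithfulness deduction, which the paper leaves implicit.
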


\begin{proof}
We prove the statement only for $F$ since $F''$ is identical. In order to show that the triangle $\id_A\xra\eta RF\to\iota^*$ is split, it suffices to show that $\Ext^1(\id_A,\iota^*)=0$. But on the level of kernels, this is just 
\begin{align*}
\Ext^1_{A\times A}(\Delta_*\cO_A,\cO_{\Gamma_\iota})&\simeq \Ext^1_{A}(\cO_A,\Delta^!\cO_{\Gamma_\iota})\quad\trm{by adjunction}\\
&\simeq \Ext^1_{A}(\cO_A,\Delta^*\cO_{\Gamma_\iota}[-2])\\
&\simeq \H^{-1}(A,\cO_{A[2]})=0.\qedhere
\end{align*}
\end{proof}

\begin{prop}\label{cohFM}
The induced map on cohomology $F^\H:\H^*(A,\bbQ)\to\H^*(K,\bbQ)$ is injective on $\H^\trm{even}(A,\bbQ)$, zero on $\H^\trm{odd}(A,\bbQ)$ and the twist $T_F$ acts on $\H^*(K,\bbQ)$ by reflection in $(\im F^\H)^\perp$ 
with respect to the Mukai pairing.
\end{prop}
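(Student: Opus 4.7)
The plan is to handle the three assertions in order, using the results already established in the paper.

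The vanishing on odd cohomology is immediate since $K$ is a K3 surface, so $\H^\trm{odd}(K,\bbQ) = 0$ and $F^\H$ necessarily annihilates $\H^\trm{odd}(A,\bbQ)$. For injectivity on $\H^\trm{even}(A,\bbQ)$, I would reduce to the corresponding statement for $F''$ via Lemma~\ref{F[1]=T_OF''}: the isomorphism $F[1]\simeq T_{\cO_K}\circ F''$ gives $F^\H = -T_{\cO_K}^\H\circ(F'')^\H$ on cohomology (as $[1]$ acts by $-1$ and $T_{\cO_K}^\H$ is an isomorphism), so $\ker F^\H = \ker(F'')^\H$. Viewing $F'' = q_*p^*$ via the diagram in Section~2 (with $p:\tilde A\to A$ the blow-up and $q:\tilde A\to K$ the double cover), the induced cohomological map agrees, up to multiplication by an invertible $\tilde\iota^*$-invariant Todd-type correction, with $q_*^\H\circ p^{*\H}$. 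Since $\iota^*$ acts on $\H^k(A,\bbQ)$ as $(-1)^k$, $p^{*\H}$ carries $\H^\trm{even}(A)$ into the $\tilde\iota^*$-invariant subspace $\H^*(\tilde A)^+$, and is injective by the blow-up formula; the identities $q^*q_* = \id + \tilde\iota^*$ and $q_*q^* = 2\cdot\id$ yield $\ker q_*^\H = \H^*(\tilde A)^-$, so $q_*^\H$ is injective on $\H^*(\tilde A)^+$. Composing gives injectivity on $\H^\trm{even}(A)$.

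For the reflection formula, I would combine two triangle identities at the cohomology level. From the defining triangle $FR\to\id\to T_F$ we read off $T_F^\H = \id - F^\H\circ R^\H$ on $\H^*(K,\bbQ)$, where $R^\H$ is the Mukai-adjoint of $F^\H$ by categorical adjointness. From the sphericity triangle $\id\to RF\to C_F\simeq\iota^*$ we get $R^\H F^\H = \id + \iota^{*\H}$, which is $2\cdot\id$ on $\H^\trm{even}(A)$ and $0$ on $\H^\trm{odd}(A)$. Consequently, for $\beta\in(\im F^\H)^\perp$, Mukai-adjointness forces $R^\H\beta = 0$ and hence $T_F^\H\beta = \beta$; while for $\beta = F^\H(\alpha)\in\im F^\H$ with $\alpha\in\H^\trm{even}(A)$, we compute $F^\H R^\H\beta = F^\H(2\alpha) = 2\beta$ and hence $T_F^\H\beta = -\beta$. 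Since $R^\H F^\H|_{\H^\trm{even}} = 2\cdot\id$ is invertible, $\im F^\H$ is non-degenerate for the Mukai pairing, giving the orthogonal splitting $\H^*(K,\bbQ) = \im F^\H \oplus (\im F^\H)^\perp$ and showing that $T_F^\H$ is the reflection in $(\im F^\H)^\perp$.

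The main obstacle I anticipate is the careful identification of $(F'')^\H$ with $q_*^\H\circ p^{*\H}$ (bookkeeping for the Todd/Mukai correction and confirming its $\tilde\iota^*$-invariance), together with making the Mukai-adjointness of $F^\H$ and $R^\H$ precise on the abelian surface $A$; both are standard but require some care.
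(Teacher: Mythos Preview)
Your proposal is correct, and the reflection argument is essentially the paper's (the paper phrases the $-1$ action on $\im F^\H$ via the identity $T_F\circ F\simeq F\circ C_F[1]\simeq F[1]$, which on cohomology is exactly your computation $F^\H R^\H F^\H = 2F^\H$). Your vanishing-on-odd argument, using $\H^\trm{odd}(K,\bbQ)=0$, is in fact cleaner than what the paper writes.

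The one genuine difference is your injectivity argument on $\H^\trm{even}$. You take a geometric detour through $F''=q_*p^*$, the blow-up formula for $p^{*\H}$, and the identification $\ker q_*^\H=\H^*(\tilde A)^-$. This works, but it is more involved than necessary and, as you note, requires bookkeeping for the Todd correction. The paper instead observes that the sphericity triangle already gives $R^\H F^\H=\id+\iota^{*\H}=2\cdot\id$ on $\H^\trm{even}(A,\bbQ)$, from which injectivity of $F^\H$ on even cohomology is immediate. You actually invoke this very identity in your reflection argument, so you have the one-line proof in hand and could dispense with the $F''$ detour entirely. What your longer route does buy is a concrete geometric picture of $\im(F'')^\H$ inside $\H^*(K,\bbQ)$ via the invariant cohomology of $\tilde A$, which is not visible from the paper's purely formal argument.
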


\begin{proof}
The first statement follows from 
the fact that
$R^\H F^\H\simeq\id_{\H^*(A,\bbQ)}+\iota^{*\H}$ and $\iota^{*\H}$ acts by the identity on $\H^\trm{even}(A,\bbQ)$ and by $-1$ on $\H^\trm{odd}(A,\bbQ)$. 
Next, the defining triangle for the twist gives $T_F^\H\simeq \id_{\H^*(K,\bbQ)}-F^\H R^\H$ from which it follows immediately that everything in $\ker R^\H\simeq(\im F^\H)^\perp$ is fixed by $T_F^\H$. Finally, to see that $T_F^\H$ acts on $\im F^\H$ as $-1$ we observe that $T_F\circ F\simeq F\circ C_F[1]\simeq F\circ\iota^*[1]\simeq F[1]$ 
and so the claim follows.
\end{proof}

\begin{rmk}
Notice that this is very different to the object case where the twist acts on cohomology by reflection in a \emph{hyperplane}; see \cite[Corollary 8.13]{huybrechts2006fourier} for more details. It follows from Proposition \ref{cohFM} that our twist is acting on cohomology by reflection in a subspace of codimension $8=\dim\H^{\trm{even}}(A,\bbQ)$.
\end{rmk}

\bibliographystyle{alpha}
\bibliography{ref}
\end{document}